\newtheorem{proposition}{Proposition}
\numberwithin{equation}{section}
\newtheorem*{symmetry theorem}{Symmetry Theorem}
\DeclareFontFamily{OT1}{pzc}{}
\DeclareFontShape{OT1}{pzc}{m}{it}{<-> s * [1.200] pzcmi7t}{}
\DeclareMathAlphabet{\mathpzc}{OT1}{pzc}{m}{it}
\begin{document}

\title{Symmetries of the Three Gap Theorem}
\author{Aneesh Dasgupta, Roland Roeder}
\maketitle

\begin{abstract}
    The Three Gap Theorem states that for any $\alpha \in \mathbb{R}$ and $N \in \mathbb{N}$, the fractional parts of $\{ 0\alpha, 1\alpha, \dots, (N - 1)\alpha \}$ partition the unit circle into gaps of at most three distinct lengths. We prove a result about symmetries in the order with which the sizes of gaps appear on the circle.
\end{abstract}

Choose an irrational angle $\alpha$ measured in ``turns,'' where one turn corresponds to $2\pi$ radians, and plot the points on the circle at angles 
        $$ 0, \quad \alpha, \quad 2 \alpha, \quad 3 \alpha, \dots, (N - 1) \alpha . $$
    For $\alpha = \sqrt{2}$ and $N = 27$, one obtains Figure 1.
    
    \begin{figure}[h!]
        \begin{center}
            \scalebox{0.8}{
            \includegraphics[scale=0.43]{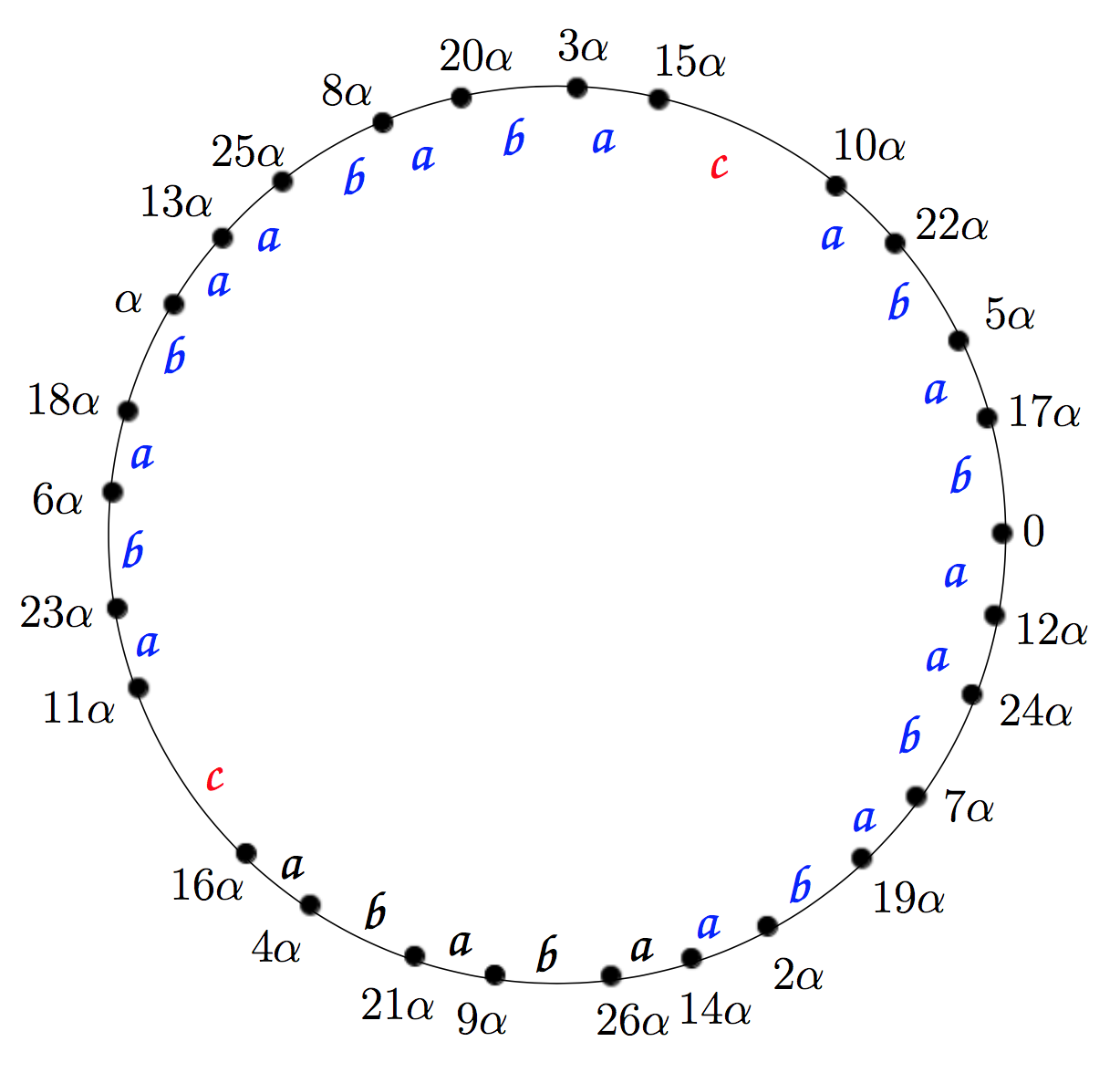}
        }
        \end{center}
        \caption{Illustration of the $3$-gap theorem for $\alpha = \sqrt{2}$ and $N=20$.  The short gaps
        are labeled with $\mathpzc{a}$, the medium gaps with $\mathpzc{b}$, and the longest gaps with $\mathpzc{c}$.
        One example of the reflectional symmetry that is proved in the Symmetry Theorem is indicated with the 
        center of symmetry being the red $\mathpzc{c}$ between $10\alpha$ and $15\alpha$, and the symmetric letters shown in blue. The length of the symmetry is indicated by the next
        closest $\mathpzc{c}$ gap, which is also shown in red (between $11\alpha$ and $16 \alpha$).}
    \end{figure}

A surprising observation is that for any choice of $N$ and $\alpha$, the distances (gaps) between consecutive points on the circle attain only three values. This is the content of the famous Three Gap Theorem, proved by S\'os, Sur\'anyi, and \'Swierczkowski in the 1950s, and it can be seen in the special case of Figure 1. In this paper we present a curious symmetry in how the sizes of the gaps are distributed on the circle. See the Symmetry Theorem and Figure 3 below.

\subsection{Setup}
In order to work more carefully, it is convenient to represent the circle as the interval $[0, 1]$ with the endpoints identified. We will now rephrase the setup in this context, and state the Three Gap Theorem more precisely.

    Let $\alpha \in \mathbb{R} \setminus \mathbb{Q}$ and $N \in \mathbb{N}$, and for any real number $x$, denote the fractional part as $\{ x \}$. We order the numbers $\{m\alpha \}$, where $0 \leq m < N$, into the sequence
    	$$0 = y_0(N) < y_1(N) < \dots < y_{N - 1}(N) < 1.$$
    We then consider the differences between consecutive numbers in the sequence, called $\textit{gaps}$ (or \textit{spacings}),
    	$$\delta_j(N) = y_{j + 1}(N) - y_j(N),$$
    for $j = 0, \dots, N - 2$ and $\delta_{N - 1}(N) = 1 - y_{N - 1}(N)$. Now, let $D(N)$\ be the number of distinct gaps and let $\Delta_{j}(N)$\ be the ordered sequence of distinct gaps from the $\delta_j(N)$, so that 
    	$$0 < \Delta_1(N) < \dots < \Delta_{D(N)}(N) < 1.$$
    	
    \begin{figure}[h!]
        \begin{center}
            \includegraphics[scale=0.43]{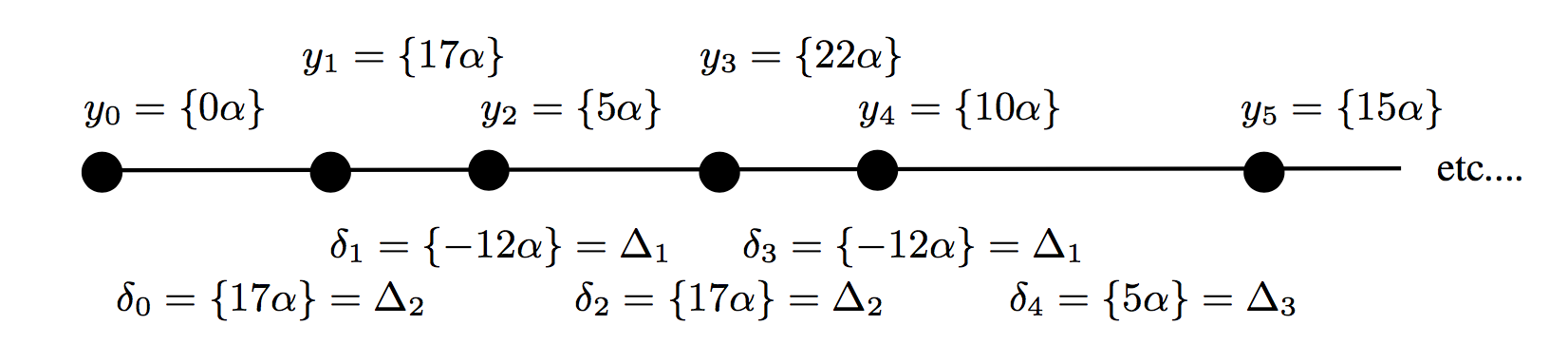}
        \end{center}
        \caption{Illustration of the definitions of $y_j(N)$, $\delta_j(N)$, and $\Delta_k(N)$ in the case of $\alpha = \sqrt{2}$ and $N = 27$ (these are the same values as in Figure 1).  Only the 
        left part of the interval is shown, and the dependence on $N$ is dropped from the notation.}
    \end{figure}        	    	
    	
    \vspace{0.1in}
    
    \noindent
    \textbf{Three Gap Theorem} (S\'os, Sur\'anyi, and \'Swierczkowski). \emph{$D(N) \leq 3$ for any choice of $\alpha$ and $N$}.
    
    \vspace{.15in}
    For the original references, see \cite{THREE_GAP_1, THREE_GAP_2, THREE_GAP_3}. Since the original proofs, there have been many new proofs and interpretations of the Three Gap Theorem. For example, see \cite{LIANG, MARKLOF_1, RAVENSTEIN}. Note also the various higher-dimensional versions of the problem that have been recently discussed, see \cite{DYSON, HIGHSCHOOL, MARKLOF_2, MARKLOF_3}.  

    
    \noindent

\subsection{Words in the Gap Lengths}
    We will describe the order with which the sizes of gaps occur on the circle with a word $W$ in the letters $\mathpzc{a}, \mathpzc{b}, \text{ and } \mathpzc{c}$. More specifically, we define the $j^{\text{th}}$ letter $W_j$, of the word $W$, to be $\mathpzc{a}, \mathpzc{b} \text{ or } \mathpzc{c}$ corresponding to the gap $\delta_j(N)$, with $\mathpzc{a}$ corresponding to the smallest gap, $\mathpzc{b}$ the medium-sized gap, and $\mathpzc{c}$ the largest gap. We interpret the word cyclically so that $W_j = W_{j \mod N}$. When it is necessary to indicate the dependence on $N$, we will denote the word $W$ as $W(N)$.

    \begin{symmetry theorem} \label{SYM}
        Fix any $\alpha \in \mathbb{R} \setminus \mathbb{Q}$ and $N \in \mathbb{N}$, and let $W$ be the word generated by the corresponding gaps on the circle. Then for any $\mathpzc{c}$ in $W$, the $k^{\text{th}}$ letter to the right of it is always the same as the $k^{\text{th}}$ letter to the left of it, so long as the index $k$ is smaller than the index of the first $\mathpzc{c}$ occurrence on either side. 
        
        More precisely, if $W_{J} = \mathpzc{c}$, then $W_{J - k} = W_{J + k}$ for $k = 0, \dots, \ell$ where $\ell + 1$ is the smallest index such that $W_{J - (\ell + 1)} = \mathpzc{c} \text{ or } W_{J + (\ell + 1)} = \mathpzc{c}$.
    \end{symmetry theorem}
        
    \begin{figure}[h!]
        \begin{center}
            \includegraphics[scale=0.43]{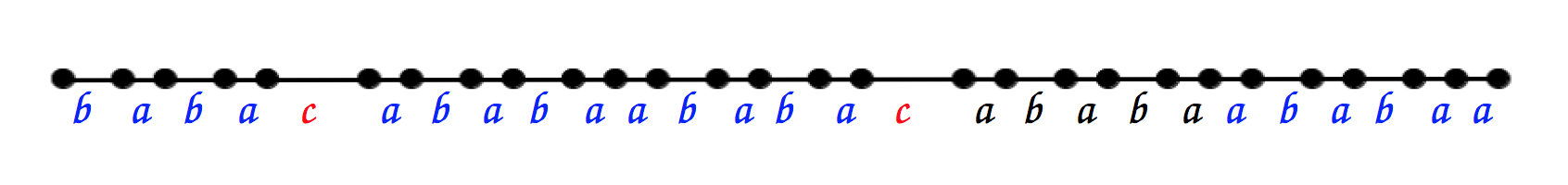}
        \end{center}
        \caption{Illustration of the Symmetry Theorem in the case of $\alpha = \sqrt{2}$ and $N = 27$ (the same values as for Figures 1 and 2). The word in $\mathpzc{a}, \mathpzc{b}, \text{ and } \mathpzc{c}$ is symmetric about either $\mathpzc{c}$. In the case of the leftmost one, the part of the word that is symmetric is shown in blue.
        }
    \end{figure}
    
    The symmetry becomes more impressive for larger values of $N$.   For example,
    the following is the word generated when $\alpha = \sqrt{2}$ and $N = 67$, and the ``limiting" $\mathpzc{c}$ happens to occur on both sides:
        $$ { \color{blue} \mathpzc{aababaababa} } { \color{red}\mathpzc{c} } { \color{blue}\mathpzc{ababaababaabababaababaababa} } { \color{red} \mathpzc{c} } \mathpzc{ababaababa} { \color{red} \mathpzc{c} } { \color{blue} \mathpzc{ababaababaababab} } $$
        
\section{Proof of the Symmetry Theorem}

    Our proof closely follows the ideas and notations of van Ravenstein \cite{RAVENSTEIN}. Let $u_j$ be the ordered sequence such that $\{ u_j \alpha \} < \{ u_{j + 1} \alpha \}$, that is, the order each point appears on the circle when identified by the number realizing it. Specifically, $(u_0, \dots, u_{N - 1})$ is a permutation of $(0, 1, 2, \dots, N - 1)$ and $y_j(N) = \{u_j \alpha \}$. Note that we are deviating from the conventions of \cite{RAVENSTEIN} who uses $(u_1, \dots, u_N)$ instead. We interpret the $u_j$ cyclically so that $u_j = u_{j \mod N}$.
    
    Remark that at certain choices of $N$ there will be only two sizes of gaps corresponding to the symbols $\mathpzc{a}$ and $\mathpzc{b}$. 
    When one more point is added, it will result in gaps of a new size, and the labelings will have to be updated. Therefore, we will call such times $N$ where there are only two sizes of gaps the \emph{relabeling times}, and denote the ordered sequence of relabeling times as $(R_k)_{k \geq 1}$.
        
    We will rely on three basic facts from van Ravenstein \cite{RAVENSTEIN}:
    
    \noindent \textbf{Fact 1:} $N$ is a relabeling time if and only if $N = u_1 + u_{N - 1}$. Indeed, in this case, adding the point $\{ N\alpha \}$ results in a point closer to $0$ than either $\{ u_1 \alpha \}$ or $\{ u_{N - 1} \alpha \}$, and thus gaps of a new size appear. Note that $u_1$ and $u_{N - 1}$ depend upon $N$, and this is a somewhat subtle condition which can be expressed nicely in terms of the continued fraction of $\alpha$, but it is not needed in our paper. 
    
    \vspace{0.1in}
    \noindent \textbf{Fact 2:} At a relabeling time $N$, we have:
        \begin{align} u_j = j \cdot u_1 \mod N \qquad j = 0, 1, \dots, N - 1 . \label{MODD} \end{align}
    
    \vspace{0.1in}
    \noindent \textbf{Fact 3:} If $N$ is not a relabeling time, then adding the point $\{ N \alpha \}$ results in splitting a gap labeled by $\mathpzc{c}$ into a gap labeled by $\mathpzc{a}$ and a gap labeled by $\mathpzc{b}$ in either possible order.
    
    \vspace{0.1in}
    \noindent Facts 1 and 2 are found in \cite{RAVENSTEIN}[Lemma 2.1] and Fact 3 is found in \cite{RAVENSTEIN}[Theorem 2.2]. 
    
    \begin{proposition} \label{PROP1}
        Let $W$ be the word of $\mathpzc{a}$'s and $\mathpzc{b}$'s at a relabeling time. Then the word satisfies the following symmetry. 
        Let $J$ be such that $u_J = N - 1$. Then we have
        $W_{J - 1} W_{J} = ``\mathpzc{ab}" \text{ or } ``\mathpzc{ba}"$ and $W_{J - 1 - k} = W_{J + k}$ for $k = 1, \dots, N - 2$.
    \end{proposition}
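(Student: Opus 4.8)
The plan is to use Fact 2 to make the gap structure fully explicit and then exhibit the claimed symmetry as an elementary reflection in the modular arithmetic. Throughout I write $p = u_1$, so that by \eqref{MODD} we have $u_j = jp \bmod N$; since $(u_0,\dots,u_{N-1})$ is a permutation of $(0,\dots,N-1)$, this forces $\gcd(p,N)=1$, and in particular $p$ is invertible modulo $N$ and the index $J$ with $u_J = N-1$ is well defined, with $u_{J-1} = N-1-p \bmod N$.

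First I would identify which positions carry $\mathpzc{a}$ and which carry $\mathpzc{b}$. Writing $\beta = \{u_1\alpha\} = y_1$ and $\nu = \{N\alpha\}$, the relation $u_{j+1} \equiv u_j + p \pmod N$ splits into two cases according to whether $u_j + p < N$ or $u_j + p \ge N$. A short computation with fractional parts shows that in the first (``no wraparound'') case $y_{j+1} = \{y_j + \beta\}$, forcing $\delta_j = \beta$, while in the second (``wraparound'') case $y_{j+1} = \{y_j + \beta - \nu\}$, forcing $\delta_j = \beta - \nu$; here one uses that at a relabeling time, by Fact 1, the new point $\{N\alpha\}$ lies closer to $0$ than $y_1$ or $y_{N-1}$, so that $\nu$ (or $1-\nu$) is small and no further wrapping occurs. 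Consequently there are exactly two gap lengths, and $W_j$ is determined purely by the residue $u_j$: the short letter occurs precisely when $u_j$ lies in the length-$p$ block adjacent to $N-1$ (or, in the other sub-case, adjacent to $0$). These two sub-cases --- $\{N\alpha\}$ just above $0$ versus just below $1$ --- are exactly what produce the two possibilities $\mathpzc{ba}$ and $\mathpzc{ab}$ for the central pair $W_{J-1}W_J$; a direct check using $u_J = N-1$ and $u_{J-1} = N-1-p$ confirms one of these is short and the other long.

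With this dictionary in hand, the symmetry becomes a statement about the involution $i \mapsto 2J - 1 - i$ on indices, which pairs $J+k$ with $J-1-k$. Using $Jp \equiv -1 \pmod N$ I would compute that this involution acts on residues by $u \mapsto -p-2-u \pmod N$. Taking the sub-case where the short block is $A = \{N-p,\dots,N-1\}$ (the complementary sub-case is entirely analogous), the final step is to check that this reflection maps $A$ onto itself \emph{except} that it interchanges membership of the two residues $N-1$ and $N-1-p$; equivalently, the symmetric difference $A \,\triangle\, f(A)$ is exactly $\{N-1,\, N-1-p\}$, the residues of the two central indices $J$ and $J-1$. Since the reflection is an involution, this means $W_i = W_{2J-1-i}$ for every index outside the central pair, which is precisely $W_{J-1-k} = W_{J+k}$ for $k = 1,\dots,N-2$, while at the center the single membership flip yields the asymmetric $\mathpzc{ab}$ or $\mathpzc{ba}$.

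I expect the main obstacle to be the first step: carefully carrying out the fractional-part bookkeeping so that the two gap lengths come out cleanly, and in particular pinning down the sign of $\nu - \beta$ (equivalently, on which side of the origin the point $\{N\alpha\}$ falls) from the relabeling condition of Fact 1. Once the clean characterization $W_j = \mathpzc{a} \iff u_j \in \{N-p,\dots,N-1\}$ is secured, the reflection computation and the symmetric-difference check are routine modular arithmetic.
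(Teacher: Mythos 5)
Your proposal is correct, but it takes a genuinely different route from the paper. The paper also starts from Fact~2, but then argues by induction: it proves the exact integer identity $u_{J-k} + u_{J+k} = N-2$ for $k = 1, \dots, N-2$, checking at each step that the two sides wrap around modulo $N$ simultaneously, and then reads off $W_{J-1-k} = W_{J+k}$ from the equality of consecutive differences $u_{J-k} - u_{J-(k+1)} = u_{J+(k+1)} - u_{J+k}$, which forces equal gap lengths. You instead globalize: from $u_j = jp \bmod N$ you extract a residue dictionary (the letter at position $j$ depends only on whether $u_j$ lies in the wraparound block $A = \{N-p, \dots, N-1\}$) and verify once and for all that the reflection $u \mapsto -p-2-u \bmod N$ fixes $A$ setwise except for swapping $N-1 \leftrightarrow N-1-p$, which is exactly the central $\mathpzc{ab}$/$\mathpzc{ba}$ flip. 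Your argument is induction-free and makes the center of symmetry explicit in the modular arithmetic; note that your $f$ pairs $u_{J+k}$ with $u_{J-1-k}$ via $u_{J+k} + u_{J-1-k} \equiv N-p-2 \pmod N$, the mod-$N$ shadow of the paper's exact invariant. What the paper's inductive formulation buys is direct reusability: Proposition~\ref{PROP2} needs the symmetry truncated at the first nearby $\mathpzc{c}$, and the step-by-step induction admits a stopping condition naturally, whereas your global reflection leans on the full cyclic word and the specific center $u_J = N-1$ and would need reworking there.

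Two minor slips, neither fatal. First, in the wraparound case $\delta_j = \{(p-N)\alpha\}$, which equals $\beta - \nu$ only in the sub-case $\nu < \beta$; in the other sub-case it is $\beta + 1 - \nu$. You can sidestep all of this bookkeeping by noting $\delta_j = \{(u_{j+1}-u_j)\alpha\}$ outright (the gap is congruent to $(u_{j+1}-u_j)\alpha$ mod $1$ and lies in $(0,1)$, and this also covers the cyclic gap $\delta_{N-1}$ since $u_{N-1} = N - p$), so the two lengths are $\{p\alpha\}$ and $\{(p-N)\alpha\}$, distinct by irrationality, with no case analysis needed. Second, your parenthetical ``(or, in the other sub-case, adjacent to $0$)'' is inaccurate: in that sub-case the short letters occupy the complement $\{0, \dots, N-p-1\}$ of $A$, of length $N-p$, not a length-$p$ block at $0$. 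This does not matter, because your symmetric-difference computation only uses that the letter is constant on $A$ and on its complement --- which letter is the short one never enters --- so the ``complementary sub-case'' really is verbatim, and in fact Fact~1 is not needed at all.
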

    
    \begin{proof}
        Note that $u_{J} \equiv (u_{J - 1} + u_1) \pmod{N}$ because of (Fact 2, Equation \ref{MODD}), and the choice of $J$ gives that $u_{J} = u_{J - 1} + u_1$, and similarly $u_{J + 1} = u_{J} + (u_1 - N)$. This implies $W_{J - 1} \neq W_J$.
        
        We will now inductively prove that for $k = 1, \dots, N - 2$ that $u_{J - k} + u_{J + k} = N - 2$. For $k = 1$ it immediately follows from the formula in the previous paragraph. Now, assume the equality holds at some $1 \leq k < N - 2$. Then, $u_{J - k} - u_1 \geq 0$ if and only if $u_{J + k} + u_1 \leq N - 2$. Therefore, $u_{J - (k + 1)} = u_{J - k} - u_1$ if and only if $u_{J + (k + 1)} = u_{J + k} + u_1$. (Note that it is impossible to have either $u_{J - (k + 1)} = N - 1$ or $u_{J + (k + 1)} = N - 1$ since $k < N - 1$.) If both sides of the if-and-only-if are false, we have  $u_{J - (k + 1)} = u_{J - k} - (u_1 - N)$ and $u_{J + (k + 1)} = u_{J + k} + (u_1 - N)$. In either case, the sum is still preserved. 
        Remark that at each step of the induction, 
            $$u_{J - k} - u_{J - (k + 1)} = u_{J + (k + 1)} - u_{J + k},$$
        thus the gap sizes are the same and hence $W_{J - 1 - k} = W_{J + k}$.
    \end{proof}
    
    Now we describe the symmetry about other gaps in the word at relabeling times.
    
    \begin{proposition} \label{PROP2}
        Let $W$ be the word of $\mathpzc{a}$'s and $\mathpzc{b}$'s at some relabeling time $N~=~R_q$. Then the word satisfies the following symmetry. Let $J$ be such that $u_J = N - p$ for some $1 \leq p \leq u_1 = R_q - R_{q - 1}$. Then we have
        $W_{J - 1} W_{J} = ``\mathpzc{ab}" \text{ or } ``\mathpzc{ba}"$ and $W_{J - 1 - k} = W_{J + k}$ for $k = 1, \dots, \ell$, where $\ell + 1$ is the smallest index such that $\max\{ u_{J - (\ell + 1)}, u_{J + (\ell + 1)}\} \geq u_J$.
    \end{proposition}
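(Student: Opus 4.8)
The plan is to generalize the argument of Proposition \ref{PROP1} by tracking the single quantity
\[ s_k := u_{J-k} + u_{J+k}, \]
and showing that it is almost completely rigid. First I would dispose of the base case. By Fact 2 we have $u_{J\pm k} \equiv (N-p) \pm k u_1 \pmod N$. Using the hypothesis $1 \le p \le u_1$ together with the fact that $u_J = N-p$ lies in the top block $\{N-u_1, \dots, N-1\}$ of indices (the indices producing a wrap, hence a $\mathpzc{b}$-gap), one locates the immediate neighbours as $u_{J+1} = u_1 - p$ (a wrap occurs on the right since $u_1 \ge p$ forces $N-p+u_1 \ge N$) and $u_{J-1} = N-p-u_1$ (no wrap on the left). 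Thus $u_{J-1}$ sits at a no-wrap position and $u_J$ at a wrap position, so the two gaps differ and $W_{J-1}W_J$ is ``$\mathpzc{ab}$'' or ``$\mathpzc{ba}$''; moreover $s_1 = (N-p-u_1)+(u_1-p) = N-2p$.

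The crucial observation, which replaces the delicate wrap-by-wrap bookkeeping of Proposition \ref{PROP1}, is that $s_k$ is heavily constrained modulo $N$. Since $u_{J-k}+u_{J+k} \equiv 2u_J = 2(N-p) \equiv N-2p \pmod N$ and each summand lies in $\{0,\dots,N-1\}$, the sum $s_k$ can take only the two values $N-2p$ or $2N-2p$. In other words the two possible ``overflows'' on the left and right are automatically coordinated, because the sum has nowhere else to land; there is no need to verify the if-and-only-if between a left wrap and a right wrap by hand.

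Next I would pin down exactly when $s_k$ jumps, by proving the equivalence
\[ s_k = 2N-2p \iff \max\{u_{J-k},\, u_{J+k}\} \ge N-p = u_J. \]
The forward direction is immediate, since $s_k = 2N-2p$ forces the larger term to be at least $s_k/2 = N-p$. For the converse, if say $u_{J+k} \ge N-p$ while $s_k = N-2p$, then $u_{J-k} = N-2p-u_{J+k} \le -p < 0$, which is impossible; hence $s_k = 2N-2p$. Consequently the smallest index $\ell+1$ for which $\max\{u_{J-(\ell+1)}, u_{J+(\ell+1)}\} \ge u_J$ is precisely the smallest $k \ge 1$ at which $s_k$ first equals $2N-2p$; thus $s_k = N-2p$ for all $1 \le k \le \ell$, and $s_{\ell+1} = 2N-2p$. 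Finally, exactly as in Proposition \ref{PROP1}, the two mirror letters agree iff the signed index-differences defining their gaps agree, i.e. $W_{J-1-k} = W_{J+k}$ iff $u_{J-k}-u_{J-1-k} = u_{J+k+1}-u_{J+k}$, which rearranges to $s_k = s_{k+1}$. Since the values $s_1,\dots,s_\ell$ are all equal to $N-2p$, this yields the claimed symmetry on the whole stretch where consecutive sums coincide.

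The step I expect to be the main obstacle is the analysis at the boundary $k = \ell$. Because $s_{\ell+1} = 2N-2p \neq s_\ell$, the letter equality $W_{J-1-k}=W_{J+k}$ propagates only while \emph{both} neighbouring sums equal $N-2p$, so one must argue carefully about precisely how far the reflection persists and how the stopping index $\ell$ enters — this is the one place where an off-by-one in the range must be checked against the definition of $\ell$. A secondary point requiring care is the placement of $u_{J\pm 1}$ in the base case when $u_1$ is large relative to $N$: there the computation $u_{J-1}=N-p-u_1$ needs $p \le N-u_1$, and one must check that the hypothesis $1 \le p \le u_1$, possibly together with the symmetric treatment of the small-gap side, still puts $u_{J-1}$ at a no-wrap position so that $W_{J-1} \neq W_J$.
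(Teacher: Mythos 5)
Your approach is correct in its core and genuinely different from the paper's. The paper proves $u_{J-k}+u_{J+k}=N-2p$ by induction on $k$, the key step being a four-statement equivalence coordinating a wrap on the left with a wrap on the right, with the definition of $\ell$ invoked \emph{inside} the induction step to forbid $u_{J\pm(k+1)}\ge N-p$. Your observation that $s_k=u_{J-k}+u_{J+k}\equiv 2u_J\equiv N-2p \pmod{N}$ by Fact 2, so that $s_k\in\{N-2p,\,2N-2p\}$ outright, eliminates the induction entirely: the two wraps are coordinated by pigeonhole, and your equivalence $s_k=2N-2p$ iff $\max\{u_{J-k},u_{J+k}\}\ge u_J$ (both directions of which are sound) locates the jump exactly at $k=\ell+1$. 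Your criterion that the mirror letters agree iff $s_k=s_{k+1}$ is likewise a cleaner finish than the paper's closing remark. Two caveats, both at spots you flagged. For the base case, $u_{J-1}=N-p-u_1$ needs $p\le N-u_1$, and this does \emph{not} follow from $p\le u_1$ alone; it does follow from the intended hypothesis $p\le R_q-R_{q-1}$, since by Fact 1 $u_{N-1}=N-u_1$ and one can check $R_q-R_{q-1}=\min\{u_1,u_{N-1}\}$ (the paper's parenthetical ``$u_1=R_q-R_{q-1}$'' is itself only valid when $u_1\le u_{N-1}$). The same bound gives $2p\le N$, which you use silently in the two-value dichotomy (you need $N-2p\ge0$). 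The paper sidesteps all of this by deducing $W_{J-1}\ne W_J$ from Fact 3; alternatively, for $\ell\ge1$ your own machinery yields it, since $s_0=2N-2p\ne N-2p=s_1$.

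The obstacle you identified at $k=\ell$ is not a mere off-by-one to be checked: your machinery in fact decides it \emph{against} the printed statement. Since $s_{\ell+1}-s_\ell=N$ and consecutive differences of the $u_j$ lie in $\{u_1,\,u_1-N\}$, the left step at stage $\ell$ must be $u_1-N$ and the right step $u_1$, so $W_{J-1-\ell}\ne W_{J+\ell}$ whenever $\ell\ge1$. Concretely, take $\alpha=\sqrt2$, $N=12$, $p=2$: then $u=(0,5,10,3,8,1,6,11,4,9,2,7)$, $W=\mathpzc{aababaababab}$, $J=2$, $\ell=4$, and $W_{J-1-4}=W_9=\mathpzc{b}\ne\mathpzc{a}=W_6=W_{J+4}$. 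So the proposition's range should read $k=1,\dots,\ell-1$; the paper's own argument effectively delivers only that range (its final remark compares $s_k$ with $s_{k+1}$, hence stops at $k=\ell-1$), and that is exactly what the proof of the Symmetry Theorem uses, since the reflection there stops strictly before the blocking $\mathpzc{c}$, whose pair of letters begins at $k=\ell$. With the hypothesis read as $p\le R_q-R_{q-1}$ and the range read as $k\le\ell-1$, your proof is complete, and it is sharper than the paper's in that it also shows the symmetry genuinely breaks at $k=\ell$.
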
        
    
    \begin{proof}
        By Facts 1 and 3, since $1 \leq p \leq R_q - R_{q - 1}$ and since removing the point $\{(N - p) \alpha\}$ corresponds to combining an $``\mathpzc{ab}"$ or $``\mathpzc{ba}"$ into a $\mathpzc{c}$, we have that $W_{J - 1} \neq W_{J}$.
        
        
        
        We will now inductively prove for $k = 1, \dots, \ell$ that 
            \begin{equation}
                u_{J - k} + u_{J + k} = N - 2p. \label{IND_HYP}
            \end{equation}
        For $k = 1$, it immediately follows from the fact that $W_{J - 1} \neq W_{J}$ as in the proof of Proposition \ref{PROP1}. Now, assume the equality holds for some $1 \leq k < \ell$.
        \newpage
        We claim that the following four statements are equivalent:
        \begin{enumerate}
            \item [(1)] $u_{J - (k + 1)} = u_{J - k} - u_1$,
            \item [(2)] $u_{J + (k + 1)} = u_{J + k} + u_1$,
            \item [(3)] $u_{J - k} - u_1 \geq 0$, and
            \item [(4)] $u_{J + k} + u_1 \leq N - 2p$.
        \end{enumerate}
        
        \noindent
        First, note that (1) is equivalent to (3) by Fact 2 (Equation \ref{MODD}). Moreover, (3) is equivalent to (4) by the induction hypothesis (\ref{IND_HYP}). Finally, we show (2) is equivalent to (4). For the forward direction, note that $u_{J + (k + 1)} \geq N - p$ is impossible due to the choice of $\ell$. Now suppose that $N - 2p < u_{J + k} + u_1 < N - p$. By the induction hypothesis (\ref{IND_HYP}) we have $u_{J - k} - u_1 = N - 2p - (u_{J + k} + u_1)$, hence
            $$N - 2p - (N - p) < N - 2p - (u_{J + k} + u_1) < N - 2p - (N - 2p),$$
        or equivalently, $-p < u_{J - k} - u_1 < 0$. This means that $u_{J - (k + 1)} > N - p$, which is again impossible by the choice of $\ell$. Meanwhile, the reverse direction follows immediately from Fact 2 (Equation \ref{MODD}).


        Therefore, $u_{J - (k + 1)} = u_{J - k} - u_1$ if and only if $u_{J + (k + 1)} = u_{J + k} + u_1$, and hence (\ref{IND_HYP}) holds when $k$ is replaced by $k + 1$. Now the proof follows exactly as in the previous proposition.
    \end{proof}
    
    Remark that even though Proposition \ref{PROP1} is a special case of Proposition \ref{PROP2}, we have included both to make the exposition clearer.

    \begin{proof}[Proof of the Symmetry Theorem]
        Let $(R_j)_{j \geq 1}$ be the ordered increasing sequence of relabeling times. It is clear that the word of length $R_j$ satisfies the theorem: there are no $\mathpzc{c}$'s to center the symmetry around. Now, remark that in moving from word $W(R_j)$ to $W(R_j - 1)$, the $\mathpzc{ab}$ or $\mathpzc{ba}$ centered at $\{ (R_j - 1)\alpha \}$ turns into a $\mathpzc{c}$, and the symmetry centered at this $\mathpzc{c}$ must span the entire word, which it indeed does by Proposition \ref{PROP1}.  
        
        Now, consider $W(R_j - k)$ where $1 < k < R_j - R_{j - 1}$. It is obtained from $W(R_j)$ by removing $\{ (N - i)\alpha \}$ for $1 \leq i \leq k$. As each point is removed, either an $\mathpzc{ab}$ or $\mathpzc{ba}$ turns into a $\mathpzc{c}$, and we must prove the asserted symmetry about each $\mathpzc{c}$. However, this corresponds directly to the symmetry proved in Proposition \ref{PROP2}. Note that the condition $\max\{ u_{J - (\ell + 1)}, u_{J + (\ell + 1)}\} \geq u_J$ corresponds to stopping the symmetry at the closest occurring $\mathpzc{c}$ to the left or right of the given one.
    \end{proof}

\noindent
\textbf{Acknowledgments.}
        The second author thanks Pavel Bleher for introducing him to this subject and for many interesting conversations about it. We also thank Val\'erie Berth\'e, Ethan Coven, Alan Haynes, and Ronnie Pavlov for their helpful comments. 
        This work was supported by NSF grant DMS-1348589.


\begin{thebibliography}{1}

    \bibitem{DYSON}
    Freeman J. Dyson. \newblock \emph{Nearest neighbor distances on a circle}, \newblock IAS Preprint IASSNS-HEP-92/27.

    \bibitem{HIGHSCHOOL}
    Pavel Bleher, Youkow Homma, Lyndon L. Ji, Roland K. W. Roeder and Jeffrey Shen. \newblock \emph{Nearest Neighbor Distances on a Circle: Multidimensional Case}. \newblock Journal of Statistical Physics 146 (2012): 446-465.

    \bibitem{MARKLOF_2}
    Alan Haynes and Jens Marklof. \newblock \emph{Higher dimensional Steinhaus and Slater problems via homogeneous dynamics}. \newblock Ann. Sci. Éc. Norm. Supér. (4) 53 (2020), no. 2, 537–557.
    
    \bibitem{MARKLOF_3}
    Alan Haynes and Jens Marklof. \newblock \emph{A Five Distance Theorem for Kronecker Sequences}, International Mathematics Research Notices, 2021;, rnab205, \url{https://doi.org/10.1093/imrn/rnab205}

    \bibitem{LIANG}
    Frank M. Liang. \newblock \emph{A short proof of the 3d distance theorem}. Discrete Math. 28 (1979): 325-326.
    
    \bibitem{MARKLOF_1}
    Jens Marklof and Andreas Str\"ombergsson. \newblock \emph{The Three Gap Theorem and the Space of Lattices}. \newblock The American Mathematical Monthly 124, no. 8 (2017): 741–45. \newblock \url{https://doi.org/10.4169/amer.math.monthly.124.8.741}
    
    \bibitem{RAVENSTEIN}
    Tony Van Ravenstein. \newblock \emph{The Three Gap Theorem (Steinhaus Conjecture)}. \newblock J. Austral. Math. Soc. (Series A) 45 (1988), 360-370.
    
    \bibitem{THREE_GAP_1}
    Vera T. S\'os. \newblock \emph{On the theory of diophantine approximations I}, \newblock Acta Math. Acad. Sci. Hungar. 8 (1957), 461–472.
    
    \bibitem{THREE_GAP_2}
    J\'anos Sur\'anyi. \newblock \emph{Uber die Anordnung der Vielfachen einer reellen Zahl mod 1}, \newblock Ann. Univ. Sci. Budapest E\"otv\"os Sect. Math. 1 (1958) 107–111.
    
    \bibitem{THREE_GAP_3}
    Stanis\l aw \'Swierczkowski. \newblock \emph{On successive settings of an arc on the circumference of a circle}, \newblock Fund. Math. 46 (1959), 187–189.

\end{thebibliography}
\end{document}